\documentclass[reqno]{amsart}[12pt]
\usepackage{fullpage}
\usepackage{amsmath, amssymb, latexsym, amsthm, amsfonts, mathrsfs,
	amscd,color, bbm, enumitem}
\usepackage{scalerel,stackengine}
\stackMath

\newcommand\reallywidehat[1]{%
	
	\savestack{\tmpbox}{\stretchto{%
			\scaleto{%
				\scalerel*[\widthof{\ensuremath{#1}}]{\kern-.6pt\bigwedge\kern-.6pt}%
				{\rule[-\textheight/2]{1ex}{\textheight}}
			}{\textheight}%
		}{0.5ex}}%
	\stackon[1pt]{#1}{\tmpbox}%
}
\parskip 1ex
\frenchspacing

\setlength{\parskip}{0.3cm}

\numberwithin{equation}{section}


\theoremstyle{plain}
\newtheorem{theorem}{Theorem}
\newtheorem*{theorem*}{Theorem}

\newtheorem{corollary}[theorem]{Corollary}
\newtheorem{proposition}[theorem]{Proposition}

\theoremstyle{remark}
\newtheorem*{remark}{Remark}

\theoremstyle{definition}
\newtheorem*{definition}{Definition}


\newcommand{\Q}{{\mathbbm Q}}

\newcommand{\Z}{{\mathbbm Z}}

\newcommand{\cO}{{\mathcal O}}

\bibliographystyle{apa}

\begin{document}
	
	\title{A finiteness theorem for abelian varieties \\ with totally bad reduction}
	
	\author{Plawan Das}

	\address{Chennai Mathematical Institute,  INDIA.}  
	\email{plawan@cmi.ac.in, d.plawan@gmail.com}
	
	\author{C.~S.~Rajan}
	
	\address{Tata Institute of Fundamental  Research, Homi Bhabha Road,
		Bombay - 400 005, INDIA.}  \email{rajan@math.tifr.res.in}

	\subjclass{Primary 11F80; Secondary 11G05. 11G10}
	
	\begin{abstract} 
		We show that up to potential isogeny, there are only finitely many abelian varieties of dimension $d$ defined over a number field $K$, such that for any finite place $v$ outside a fixed finite set $S$ of places of $K$ containing the archimedean places, it has either good reduction at $v$, or totally bad reduction at $v$ and good reduction over a quadratic extension of the completion of $K$ at $v$. 
	\end{abstract}
	\maketitle

	\section{Introduction}
	In \cite{Fa}, Faltings proved a conjecture of Shafarevich that given a number field $K$ and a finite set of places $S$ of $K$, there are only finitely many isogeny classes of abelian varieties of dimension $d$ over $K$ that have good reduction at the finite places of $K$ outside $S$. In this note, we consider some finiteness analogues for abelian varieties defined over $K$ satisfying potential good reduction outside $S$. 
	
	Two abelian varieties defined over a field $K$ are said to be {\em potentially isogenous} if they become isogenous over a finite extension of $K$. Suppose  $A$ is an abelian variety defined over a number field $K$. For a place $v$ of $K$, let $A_v=A\times_{Spec(K)} Spec(K_v)$ denote its localization at $v$, where $K_v$ is the completion of $K$ at $v$. We have, 
	
	\begin{theorem}\label{theorem:ellcurves}
		Let $K$ be a number field and $S$ be a finite set of places containing the archimedean places of $K$. Up to potential isogeny, there are only finitely many elliptic curves $E$ defined over $K$, such that for any place $v$ not in $S$, the elliptic curve $E_v$ has good reduction over a quadratic extension of $K_v$. 
	\end{theorem}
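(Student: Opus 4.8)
The plan is to reduce, by one well-chosen global quadratic twist, to the case of good reduction everywhere outside $S$, and then to quote the finiteness recalled in the introduction. Precisely, I would show that after enlarging $S$ --- which only weakens the hypothesis and so loses no generality --- every elliptic curve $E$ as in the statement is potentially isogenous to an elliptic curve over $K$ with good reduction outside $S$. Since, by the finiteness recalled in the introduction (Faltings' theorem with $d=1$; for elliptic curves this is already a theorem of Shafarevich), there are only finitely many elliptic curves over $K$ with good reduction outside $S$ up to isogeny, hence \emph{a fortiori} up to potential isogeny, this places the curves $E$ in finitely many potential isogeny classes.

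First I would normalize $S$. Enlarging it if necessary, I assume $S$ contains all places of $K$ above $2$ and $3$ and that the ring of $S$-integers $\cO_{K,S}$ is a principal ideal domain; both require only finitely many additional places. Then every $v\notin S$ has residue characteristic at least $5$, and the hypothesis forces $E_v$ to have either good reduction or additive reduction of Kodaira type $I_0^*$: multiplicative reduction is never potentially good, and among the additive types with potential good reduction only $I_0^*$ attains good reduction over a quadratic extension (necessarily ramified), the remaining types requiring ramification index $3$, $4$, or $6$.

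Next I would construct the twist. Let $T$ be the finite set of $v\notin S$ at which $E$ has bad reduction, fix a prime $\ell$ distinct from the residue characteristics of the places in $T$, and let $\rho\colon G_K\to\GL_2(\Z_\ell)$ be the representation on the $\ell$-adic Tate module of $E$. For $v\in T$, the group $\rho(I_v)$ has order exactly $2$ --- nontrivial because the reduction is additive, of order at most $2$ because $E_v$ becomes good over a quadratic extension --- and its nontrivial element has square the identity and determinant $1$ (as $\det\rho$ is unramified at $v$), hence equals $-I$; thus $\rho|_{I_v}=\psi_v\cdot I$ is scalar, with $\psi_v$ the unique ramified quadratic character of $I_v$ (unique since the residue characteristic of $v$ is odd). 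Using that $\cO_{K,S}$ is a PID, choose for each $v\in T$ a generator $\pi_v$ of $\mathfrak p_v$ and put $d=\prod_{v\in T}\pi_v$; then $w(d)=1$ for $w\in T$ and $w(d)=0$ for $w\notin S\cup T$. Let $\chi\colon G_K\to\{\pm1\}$ be the quadratic character attached to $K(\sqrt d\,)$ and set $E'=E\otimes\chi$. For $w\notin S\cup T$, $\chi$ is unramified at $w$, so $E'$ still has good reduction there; for $w\in T$, $\chi$ is ramified at $w$, so $\chi|_{I_w}=\psi_w$ and $\rho_{E'}|_{I_w}=(\rho|_{I_w})\otimes\chi|_{I_w}=\psi_w^{2}\cdot I$ is trivial, whence $E'$ has good reduction at $w$ by the N\'eron--Ogg--Shafarevich criterion. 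Thus $E'$ has good reduction outside $S$, and since $E'=E\otimes\chi$ becomes isomorphic to $E$ over $K(\sqrt d\,)$, the curve $E$ is potentially isogenous to $E'$, completing the reduction.

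I expect the crux to be the local analysis at the places $v\in T$: it is precisely the fact that inertia acts there through a scalar quadratic character that allows a single global quadratic twist, ramified exactly along $T$, to clear all the additive reduction at once without introducing new bad places outside $S$. The residue characteristics $2$ and $3$, where the reduction can be wild and the clean trichotomy above may fail, are avoided by absorbing those places into $S$ at the outset; one need only check that this enlargement, and the passage to a principal ideal domain, costs nothing, which is clear since it merely shrinks the family of curves to be controlled.
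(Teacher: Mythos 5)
Your proposal is correct, but it follows a genuinely different route from the paper. The paper's proof of this theorem is: (a) show that at each bad place $v\notin S$ the inertia group acts on $V_\ell(E)$ through the scalars $\{\pm 1\}$ (done there via the N\'eron model: totally bad plus potentially good reduction forces $V_\ell(E)^{I_v}=0$, and the index-two subgroup $I_w$ acts trivially), and then (b) invoke the extension of Faltings' finiteness criterion from \cite[Corollary 11]{DR}, which directly yields finiteness up to potential isogeny for abelian varieties whose inertia outside $S$ acts by scalars. Your local analysis is essentially the same content as step (a), derived instead from $\det\rho=$ cyclotomic and the order-two constraint (and matching Proposition \ref{Additive_example}); but in place of step (b) you enlarge $S$ (to absorb $2$, $3$ and make $\cO_{K,S}$ principal) and build one explicit global quadratic character, ramified exactly at the bad places, whose twist clears all bad reduction outside $S$, reducing the statement to the classical Shafarevich--Faltings theorem. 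This is more elementary and self-contained — it avoids \cite{DR} entirely — and it gives slightly more, namely that every such $E$ is a quadratic twist of a curve with good reduction outside the enlarged $S$ (close in spirit to how the paper treats twist equivalence and its remark on $j$-invariants); the same twisting argument would also cover the first part of Theorem \ref{theorem:general}. What the paper's route buys is uniformity and generality: no enlargement of $S$ or class-group normalization is needed, and \cite[Theorem 7, Corollary 11]{DR} applies whenever inertia acts by arbitrary scalars, not just $\pm 1$. Two harmless points to tidy in your write-up: at places $w\notin S\cup T$ your fixed $\ell$ may equal the residue characteristic, so either argue place-by-place with an auxiliary $\ell$ or simply quote that twisting by a character unramified at $w$ preserves good reduction at $w$; and note that $g^2=I$, $\det g=1$, $g\neq I$ forces $g=-I$ because $g$ is diagonalizable in characteristic zero.
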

	
	For any field $F$, let $\bar{F}$ denote it's algebraic closure, and  
	$G_F=\mathrm{Gal}(\bar{F}/F)$ denote the absolute Galois group of $F$.
	We recall that for a quadratic character $\chi\in \mathrm{Hom}_{\text{cont}}(G_K,\{\pm 1\})\hookrightarrow \mathrm{Hom}_{\text{cont}}(G_K, \mathrm{Aut}_{\bar{K}}(A))$, the {\em quadratic twist} of $A$ by $\chi$, denoted by $A^{\chi}$, is an abelian variety defined over $K$ such that there is an $\bar{K}$-isomorphism $\psi:A\rightarrow A^{\chi}$ satisfying
	\[\psi^{-1}\psi^\sigma=[\chi(\sigma)],\quad \mbox{ for all $\sigma\in G_K$}. \] The foregoing result can be strengthened when the elliptic curves do not have complex multiplication i.e.,  $\mbox{End}_{\bar{K}}(E)=\Z$: 
	\begin{theorem}\label{theorem:noncm}
		Let $K$ be a number field and $S$ be a finite set of places containing the archimedean places of $K$. Up to quadratic twists, there are only finitely many isomorphism classes of non-CM elliptic curves $E$ defined over $K$, which acquires good reduction over some quadratic extension of $K_v$, for any place $v$ of $K$ not in $S$. 
	\end{theorem}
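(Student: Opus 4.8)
The plan is to deduce the statement from Theorem~\ref{theorem:ellcurves} by bounding the degree of the potential isogeny, and then using that a non-CM elliptic curve has $j$-invariant different from $0$ and $1728$.

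First I would apply Theorem~\ref{theorem:ellcurves}: any curve $E$ as in the statement becomes isogenous over $\bar{K}$ to one of finitely many elliptic curves $A_1,\dots,A_n$ over $K$, and since an isogeny over $\bar{K}$ induces an isomorphism of endomorphism algebras over $\bar{K}$, each $A_i$ is non-CM. Fixing $A=A_i$, it suffices to prove that the set of $j$-invariants $j(E)$, as $E$ runs over the curves of the theorem potentially isogenous to $A$, is finite: granting this, since such an $E$ is non-CM we have $j(E)\neq 0,1728$, hence $\mathrm{Aut}_{\bar{K}}(E)=\{\pm1\}$, so any two such curves with the same $j$-invariant are quadratic twists of one another, and finiteness up to quadratic twist follows. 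This is precisely where the non-CM hypothesis is essential: for $j=0$ or $1728$ one would only get finiteness up to sextic, respectively quartic, twist, since there $\mathrm{Aut}_{\bar{K}}(E)$ is strictly larger than $\{\pm1\}$.

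It remains to bound the isogeny. Since $E$ is defined over $K$ we have $j(E)\in K$, and $A$ is over $K$ as well; as $E$ is potentially isogenous to $A$, the curves $E_{\bar{K}}$ and $A_{\bar{K}}$ are linked by an isogeny over $\bar{K}$, which we may write as a composite of cyclic isogenies. By the isogeny estimates of Masser and W\"ustholz, two $\bar{K}$-isogenous elliptic curves defined over $K$ are in fact linked by an isogeny whose degree is bounded solely in terms of $[K:\Q]$ and the height of $A$, by a constant $C=C(K,A)$. Hence $j(E)$ is joined to $j(A)$ by a chain of cyclic isogenies over $\bar{K}$ the product of whose degrees is at most $C$, so $j(E)$ lies in the finite set of $j$-invariants reachable from $j(A)$ by such chains (equivalently, $\Phi_N(j(A),j(E))=0$ for some $N\le C$). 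Intersecting with $K$ and running over the finitely many classes $A_1,\dots,A_n$, only finitely many $j$-invariants arise, so by the reduction above there are finitely many curves of the theorem up to quadratic twist. (In place of Masser--W\"ustholz one could instead invoke the finiteness, for a fixed number field, of the degrees of cyclic $K$-rational isogenies of non-CM elliptic curves, in the spirit of Mazur's isogeny theorem.)

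The step I expect to be the main obstacle is this uniform bound on the isogeny degree: without controlling the degree purely in terms of $K$ and $A$, the $\bar{K}$-isogeny class of $A$ could a priori contribute infinitely many $K$-rational $j$-invariants, and it is exactly the Masser--W\"ustholz theorem (or the Mazur-type finiteness of rational isogenies) that closes this gap. The remaining ingredients — the reduction to finitely many potential isogeny classes via Theorem~\ref{theorem:ellcurves}, and the computation $\mathrm{Aut}_{\bar{K}}(E)=\{\pm1\}$ that turns ``finitely many $j$-invariants'' into ``finitely many up to quadratic twist'' — are then routine.
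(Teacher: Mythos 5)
Your proposal is correct in outline but takes a genuinely different route from the paper. After the reduction to finitely many potential isogeny classes, the paper never bounds isogeny degrees: it uses $\mathrm{End}_{\bar{K}}(E)=\Z$ to show that for any $\bar{K}$-isogeny $\phi\colon E'\to E$ of degree $d$ the map $\sigma\mapsto \frac{1}{d}\phi^{\sigma}\circ\phi^{*}$ is a $1$-cocycle with values in $\Q^{*}$, hence a quadratic character of $G_K$, so that some quadratic twist of $E'$ is isogenous to $E$ \emph{over} $K$; it then concludes with Faltings' theorem that a $K$-isogeny class contains only finitely many isomorphism classes. That argument is soft (no height or isogeny estimates) and, crucially for Theorem \ref{theorem:general}, works verbatim for abelian varieties of arbitrary dimension with $\mathrm{End}_{\bar{K}}(A)=\Z$, whereas your passage through $j$-invariants and $\mathrm{Aut}_{\bar{K}}(E)=\{\pm 1\}$ is specific to elliptic curves. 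What your route buys is an effective flavour: a bound $N\le C(K,A)$ on the isogeny degree and an explicit finite set of $j$-invariants cut out by the modular polynomials $\Phi_N(j(A),X)$.

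Two caveats. First, the Masser--W\"ustholz estimate is stated for curves isogenous over the number field in question, while your $E$ and $A$ are a priori only isogenous over $\bar{K}$; before invoking it you need the (easy, non-CM) fact that such an isogeny is defined over a quadratic extension of $K$ --- which is exactly the paper's cocycle observation --- and then apply the estimate over that quadratic extension (its degree over $\Q$ is still at most $2[K:\Q]$, and the relevant height is that of the fixed $A$), or equivalently replace $E$ by the quadratic twist that is $K$-isogenous to $A$, which has the same $j$-invariant. With this supplement your main argument is sound. Second, your parenthetical fallback is weaker than you suggest: uniform boundedness of degrees of cyclic $K$-rational isogenies of non-CM elliptic curves (Mazur--Kenku type) is known over $\Q$ but is open over a general number field. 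What you actually need --- finiteness of the cyclic $K$-rational isogeny degrees emanating from each of the finitely many fixed curves $A_i$ --- does hold by Serre's open image theorem, but again only after the descent step above that makes the isogeny $K$-rational.
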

	
	\subsection{Abelian varieties} We now give a generalization of the above results to abelian varieties.  For a finite place $v$ of $K$, denote by $\mathcal{O}_v$ its ring of integers and  by $k_v$ its residue field.  Given a representation $\rho:\Gamma \to GL_n(F)$ of a group $\Gamma$ and $F$ any field, the algebraic monodromy group $G_{\rho}$ of $\rho$ is defined as the smallest algebraic subgroup defined over $F$ of $GL_n$, such that $G_{\rho}(F)$ contains the image $\rho(\Gamma)$. Equivalently, it is the algebraic closure inside $GL_n$ of the image $\rho(\Gamma)$ of $\rho$. 
	
	Suppose $A$ is an abelian variety defined over a number field $K$. Let  $\mathcal{A}_v$ denotes the N\'eron model of $A$ at $v$. This is a smooth group scheme of finite type over $\cO_v$, with generic fiber isomorphic to $A$ and representing the functor $X\mapsto \mbox{Hom}(X\times_{\cO_v}K_v, A_v)$ for $X$ a smooth scheme over $\cO_v$. Let $\tilde{A}_v$ denote the special fibre of the N\'eron model $\mathcal{A}_v$ and let $\tilde{A}_v^0$ denote its connected component containing identity. 
	
	\begin{definition} The abelian variety $A_v$ over $K_v$ is said to have {\em totally bad reduction} (resp. {\em purely additive reduction}) if $\tilde{A}_v^0$ is an affine group scheme (resp.  unipotent group scheme) over $k_v$. 
	\end{definition}
	\begin{remark}
		Suppose $A$ is an elliptic curve, then $A$ has good reduction at $v$ (resp. semi-stable reduction, additive reduction) if and only if  $\tilde{A}_v$ is an elliptic curve ( resp. $\tilde{A}_v^0\cong \mathbb{G}_m$, $\tilde{A}_v^0\cong \mathbb{G}_a$). Hence totally bad reduction is same as bad reduction and purely additive reduction is same as additive reduction.
	\end{remark}
	We now give a generalization of theorem $\ref{theorem:ellcurves}$ and theorem $\ref{theorem:noncm}$ to the case of abelian varieties.
	
	\begin{theorem}\label{theorem:general}
		Let $K$ be a number field and $S$ be a finite set of places containing the archimedean places of $K$. Fix a natural number $d$. Up to potential isogeny, there are only finitely many abelian varieties $A$ of dimension $d$ defined over $K$, such that for any place $v$ not in $S$, the abelian variety $A_v$ has either good reduction at $v$, or totally bad reduction at $v$ and good reduction over a quadratic extension of $K_v$. 
		
		If in addition, $\mbox{End}_{\bar{K}}(A)=\Z$,  then up to quadratic twists, there are only finitely many isomorphism classes of such abelian varieties defined over $K$. 
	\end{theorem}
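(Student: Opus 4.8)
The plan is to show that the reduction hypothesis forces inertia at the bad places to act on the Tate module through $\{\pm\mathrm{Id}\}$, to deduce from this that the set of bad places is finite, to remove the bad reduction by a single global quadratic twist, and then to apply Faltings' theorem. Fix an odd prime $\ell$ and enlarge $S$ to a finite set $S'$, depending only on $K$, $S$ and $d$, that contains the archimedean places, the places above $2\ell$, and finitely many finite places whose ideal classes generate $\mathrm{Cl}_K/2\,\mathrm{Cl}_K$. \emph{Step 1 (local reduction theory).} Let $v\notin S'$ and put $V_\ell A=T_\ell A\otimes\Q_\ell$. If $A$ has good reduction at $v$ then $I_v$ acts trivially on $V_\ell A$. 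If instead $A$ has totally bad reduction at $v$ acquiring good reduction over a quadratic extension $L_w/K_v$, then by N\'eron--Ogg--Shafarevich $I_v$ acts on $V_\ell A$ through $\mathrm{Gal}(L_w/K_v)=\langle\sigma\rangle$; writing $B$ for the (good) reduction of $A_{L_w}$, Grothendieck's theory of N\'eron models identifies the dimension of the abelian part of $\tilde{A}_v^0$ with $\dim(V_\ell B)^{\sigma}$, which vanishes precisely because the reduction is totally bad, so $\sigma$ acts on $V_\ell A$ as $-\mathrm{Id}$. Thus for every $v\notin S'$, $A_v$ is the quadratic twist, by a character $\psi_v$ of $G_{K_v}$ which is unramified exactly when $A$ has good reduction at $v$ and which otherwise (as $v\nmid 2$) restricts on $I_v$ to the unique nontrivial quadratic character $\epsilon_v$ of $I_v$, of an abelian variety over $K_v$ with good reduction.

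\emph{Step 2 (finiteness of the bad locus).} Let $T_A$ be the set of $v\notin S'$ at which $A$ has totally bad reduction. For $v\in T_A$ the group $I_v$ acts on $A[\ell]$ through $\langle-\mathrm{Id}\rangle$, which is nontrivial since $\ell$ is odd; hence $K(A[\ell])/K$ is ramified at $v$. As $K(A[\ell])/K$ is a finite extension, it is ramified at only finitely many places, so $T_A$ is finite. \emph{Step 3 (global untwisting).} Since $T_A$ is finite and $S'$ contains generators of $\mathrm{Cl}_K/2\,\mathrm{Cl}_K$, one can find $\theta_A\in K^\times$ with $v(\theta_A)$ odd for every $v\in T_A$ and even for every $v\notin T_A\cup S'$; let $\chi_A$ be the quadratic character of $G_K$ cut out by $K(\sqrt{\theta_A})$. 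Then $\chi_A$ is unramified outside $T_A\cup S'$, and for $v\in T_A$ it is ramified at $v$, so $\chi_A|_{I_v}=\epsilon_v=\psi_v|_{I_v}$. Therefore the twist $A^{\chi_A}$ has good reduction at every $v\notin S'$: for $v\notin T_A\cup S'$ both $A$ and $\chi_A$ are unramified at $v$, while for $v\in T_A$ the actions of $I_v$ coming from $\rho_{A,\ell}$ and from $\chi_A$ cancel, so $A^{\chi_A}$ is unramified at $v$, and good reduction follows from N\'eron--Ogg--Shafarevich.

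\emph{Step 4 (finiteness).} Now $A^{\chi_A}$ is a $d$-dimensional abelian variety over $K$ with good reduction outside the fixed finite set $S'$, so by Faltings' theorem it belongs to one of finitely many $K$-isogeny classes; as $A$ becomes isomorphic to $A^{\chi_A}$ over the quadratic extension $K(\sqrt{\theta_A})$, the abelian varieties in question lie in finitely many potential isogeny classes, which is the first assertion. Suppose in addition $\mathrm{End}_{\bar K}(A)=\Z$; then $\mathrm{End}_{\bar K}(A^{\chi_A})=\Z$, so $\mathrm{Aut}_{\bar K}(A^{\chi_A})=\{\pm1\}$ and every abelian variety over $K$ that is $\bar K$-isomorphic to $A^{\chi_A}$ is a quadratic twist of it. It then suffices to bound, within each isogeny class, the number of isomorphism classes having good reduction outside $S'$: Zarhin's trick together with Faltings' finiteness for principally polarized abelian varieties bounds the Faltings height of such an abelian variety in terms of $d$ and $S'$, and — using $\mathrm{End}_{\bar K}(A)=\Z$ — an isogeny class of abelian varieties with endomorphism algebra $\Q$ contains only finitely many isomorphism classes of bounded Faltings height. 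Hence $A^{\chi_A}$ is one of finitely many up to isomorphism, and $A$ is a quadratic twist of one of these.

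The main obstacle is Step 1: establishing that totally bad reduction which becomes good over a quadratic extension forces inertia to act by $\pm\mathrm{Id}$ on $V_\ell A$ — equivalently, that $A_v$ is then a ramified quadratic twist of an abelian variety with good reduction. This rests on the precise relationship, in Grothendieck's theory (SGA 7), between the abelian part of the special fibre of the N\'eron model over $K_v$ and the $\sigma$-fixed part of the reduction over the quadratic extension. A secondary point requiring care is the finiteness of isomorphism classes within an isogeny class in the non-CM part of Step 4, which is exactly where the hypothesis $\mathrm{End}_{\bar K}(A)=\Z$ enters beyond its role in making all twists quadratic.
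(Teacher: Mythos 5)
Your proposal is correct in substance, but it takes a genuinely different route from the paper. The local step is the same in content: the paper proves $V_\ell(A_v)^{I_v}=0$ via Serre--Tate (totally bad $+$ potentially good $\Rightarrow$ purely additive, so invariants are bounded by the component group) and concludes that inertia acts by $\pm\mathrm{Id}$; note that your cited identification should read $\dim V_\ell(A_v)^{I_v}=2a+t$ (twice the abelian rank plus the toric rank), so ``totally bad'' alone only kills $a$ --- you also need the standard fact (Serre--Tate's remark) that potential good reduction forces $t=0$; this is a glossed point, not a fatal one. After that the arguments diverge. The paper feeds ``inertia acts by scalars outside $S$'' directly into the Das--Rajan extension of Faltings' finiteness criterion (\cite[Corollary 11]{DR}) to get finiteness up to potential isogeny, and then, under $\mathrm{End}_{\bar K}(A)=\Z$, upgrades potential isogeny to quadratic-twist finiteness via the cocycle argument in $H^1(G_K,\Q^*)$ (or via $\ell$-adic irreducibility). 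You instead construct one global quadratic character $\chi_A$ --- using class-group generators placed in the enlarged set $S'$ and the fact that the bad locus is finite --- whose restriction to inertia matches the local quadratic characters, so that $A^{\chi_A}$ has good reduction outside $S'$, and then you quote the classical Shafarevich/Faltings theorem. What each approach buys: yours is more self-contained (no appeal to \cite{DR}) and in fact proves more than the stated second assertion, since $A$ is literally a quadratic twist of one of finitely many abelian varieties with good reduction outside $S'$, with no need of $\mathrm{End}_{\bar K}(A)=\Z$ (that hypothesis in your Step 4 is only used for remarks that are not actually needed); the paper's route through the scalar-inertia finiteness criterion is the one that generalizes when the inertial scalars are not $\pm1$, or when no global geometric untwisting of the representation is available.
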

	
	\begin{remark}
		One cannot expect the finiteness up to potential isogeny of elliptic curves defined over a number field which have potentially good reduction outside a given set of places $S$ of $K$. This can be seen as follows: it is known that given any $x\in K$, there exists an elliptic curve $E_x$ defined over $K$ whose $j$-invariant is $x$. The curve $E_x$ has potentially good reduction at a place $v$ not in $S$ if and only if its $j$-invariant $x$ is integral at $v$, i.e., $v(x)\geq 0$. The curve $E_x$ can be assumed to be non-CM, by taking $S$ to be non-empty and $x$ not integral at a place in $S$. There are infinitely many $x$ satisfying the above conditions. 
		
		Suppose there are only finitely many such curves up to potential isogeny. Arguing as in the proof of Theorem \ref{theorem:noncm} given below, we get that up to twisting by a quadratic character, there are only finitely many equivalence classes of such elliptic curves. Since twisting does not change the $j$-invariant, there are only finitely many possible $j$-invariants. This contradicts the infinitude of $j$-invariants given above. 
		
	\end{remark}
	
	\begin{remark}		It would be interesting to know the extent to which potential finiteness theorems can be proven. For example, do we have a finiteness result as above for abelian surfaces, which locally at a place $v$ outside $S$ has either good reduction or the connected component of its N\'{e}ron model at $v$ is an extension of an elliptic curve by $\mathbb{G}_a$?
		
		Does there exist potential finiteness if we work with fixed bounded degree finite extensions of number fields? For example, can we say that there are only finitely many isogeny classes of elliptic curves over a number field $K$ that have good reduction at all primes outside the primes lying over $S$ in some quartic extension of $K$?		
		
	\end{remark}

	\section{Elliptic Curves }
	We state a proposition giving alternate characterizations in the case of elliptic curves of the hypothesis in Theorem \ref{theorem:ellcurves}: 
	\begin{proposition}\label{Additive_example}
		Let $E$ be an elliptic curve over a  local field $K$ with bad reduction, where $K$ is a finite extension of $\mathbb{Q}_p$ and $\mathfrak{p}$ be the prime lying above $p$. Let $p \neq 2,3$ and  $\Delta$ be the discriminant of $E$ corresponding to a Weierstrass model of $E$. The following are equivalent:
		\begin{itemize}
			\item[(i)] The base change $E_L$ has good reduction for some quadratic extension $L / K$.
			\item[(ii)] The Galois group of the minimal extension of $K^\mathrm{unr}$ over which $E$ acquires good reduction, denoted by $\Phi_{\mathfrak{p}}$, is cyclic of order $2$, where $K^{\mathrm{unr}}$ denotes the maximal unramified extension of $K$.
			\item[(iii)] The Kodaira type of $E$ is $I_{0}^{*}$.
			\item[(iv)]   $v_{\mathfrak{p}}(\Delta)\equiv 6\pmod{12}$. If $\Delta_\text{min}$ denotes the minimal discriminant of $E$, then $v_{\mathfrak{p}}(\Delta_\text{min})=6$. 
		\end{itemize}
	\end{proposition}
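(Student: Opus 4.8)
The plan is to route everything through the $\ell$-adic representation $\rho_\ell\colon G_K\to \mathrm{Aut}(T_\ell E)$ on the Tate module ($\ell\neq p$) together with Tate's algorithm, exploiting the simplifications available for $p\geq 5$. Recall (N\'eron--Ogg--Shafarevich) that for an algebraic extension $F/K$, $E_F$ has good reduction iff $I_F$ acts trivially on $T_\ell E$; hence $E$ acquires good reduction over $K^{\mathrm{unr}}$ iff it already has good reduction over $K$, and more generally, for $M/K$, $E_M$ has good reduction iff it does so over $M^{\mathrm{unr}}=M\cdot K^{\mathrm{unr}}$. Since $E$ has bad reduction, $\Phi_{\mathfrak p}$ is therefore a nontrivial group, and one identifies $\Phi_{\mathfrak p}\cong\rho_\ell(I_K)$: the minimal extension of $K^{\mathrm{unr}}$ realizing good reduction is the fixed field of $\ker(\rho_\ell|_{I_K})$. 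When this group is finite (equivalently $E$ has potential good reduction, equivalently $j_E\in\cO_K$), its order for $p\geq 5$ lies in $\{1,2,3,4,6\}$, hence is prime to $p$, so $\rho_\ell(I_K)$ is a quotient of the pro-cyclic tame inertia and is cyclic. We may assume $E$ has potential good reduction: otherwise $E$ is potentially multiplicative, no extension yields good reduction, the Kodaira type is some $I_n$ or $I_n^{*}$, and (i), (ii), (iii) all fail --- so we prove the equivalence under the (evidently intended) standing hypothesis $j_E\in\cO_K$.

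Next I would record, for $p\geq 5$, the classical dictionary furnished by Tate's algorithm between the quantities in (ii)--(iv). As $E$ has bad but potentially good reduction, its Kodaira type over $K$ is one of $II, III, IV, I_0^{*}, IV^{*}, III^{*}, II^{*}$, with $v_{\mathfrak p}(\Delta_{\min})$ equal to $2,3,4,6,8,9,10$ respectively (in particular $<12$) and $|\Phi_{\mathfrak p}|$ equal to $6,4,3,2,3,4,6$ respectively. Thus $|\Phi_{\mathfrak p}|=2$ exactly for type $I_0^{*}$, giving (ii)$\Leftrightarrow$(iii); and among these types $I_0^{*}$ is characterized by $v_{\mathfrak p}(\Delta_{\min})=6$, which --- since for any Weierstrass model $v_{\mathfrak p}(\Delta)$ differs from $v_{\mathfrak p}(\Delta_{\min})$ by a multiple of $12$ and $v_{\mathfrak p}(\Delta_{\min})<12$ --- is equivalent to $v_{\mathfrak p}(\Delta)\equiv 6\pmod{12}$, giving (iii)$\Leftrightarrow$(iv).

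It remains to tie (i) into this circle. For (ii)$\Rightarrow$(i): if $\Phi_{\mathfrak p}$ is cyclic of order $2$ then, using $p\neq 2$ so that $K^{\mathrm{unr}}$ has a unique ramified quadratic extension $K^{\mathrm{unr}}(\sqrt{\pi})$, $E$ acquires good reduction over $K^{\mathrm{unr}}(\sqrt\pi)$; taking $L=K(\sqrt\pi)$, a ramified quadratic extension of $K$ with $L^{\mathrm{unr}}=L\cdot K^{\mathrm{unr}}=K^{\mathrm{unr}}(\sqrt\pi)$, the descent statement above shows $E_L$ has good reduction. For (i)$\Rightarrow$(ii): if $E_L$ has good reduction for some quadratic $L/K$, then $E$ acquires good reduction over $L\cdot K^{\mathrm{unr}}$, an extension of $K^{\mathrm{unr}}$ of degree $\leq 2$, tame (as $p\neq 2$) hence cyclic; the minimal extension realizing good reduction sits inside it, so $|\Phi_{\mathfrak p}|$ divides $2$, and equals $2$ as $\Phi_{\mathfrak p}$ is nontrivial. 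This closes the cycle (i)$\Leftrightarrow$(ii)$\Leftrightarrow$(iii)$\Leftrightarrow$(iv).

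I expect the main obstacle to be purely expository: assembling the standard inputs with the correct hypotheses and citations. The hypothesis $p\neq 2,3$ is used twice in an essential way --- to force $\Phi_{\mathfrak p}$ tame (hence cyclic, and to guarantee the \emph{unique} ramified quadratic subextension of $K^{\mathrm{unr}}$, which is precisely what lets us descend the field of good reduction from $K^{\mathrm{unr}}$ to a quadratic extension of $K$), and to make Tate's algorithm produce the clean type/valuation table above --- and the delicate step is quoting correctly the classical trichotomy of data (Kodaira type $\leftrightarrow$ $|\Phi_{\mathfrak p}|$ $\leftrightarrow$ $v_{\mathfrak p}(\Delta_{\min})$) for residue characteristic $\geq 5$, rather than any genuinely new difficulty.
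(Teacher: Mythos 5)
Your argument is correct and follows essentially the same route as the paper: the dictionary (ii)$\Leftrightarrow$(iii)$\Leftrightarrow$(iv) coming from Tate's algorithm/Serre for $p\geq 5$, the identification (via Hensel's lemma, $p\neq 2$) of $K^{\mathrm{unr}}(\sqrt{\pi})$ as the unique ramified quadratic extension of $K^{\mathrm{unr}}$ so that one may take $L=K(\sqrt{\pi})$ for (ii)$\Rightarrow$(i), and the compositum $L\cdot K^{\mathrm{unr}}$ for (i)$\Rightarrow$(ii). Your explicit reduction to the potential good reduction case (ruling out types $I_n$, $I_n^{*}$, where (iv) alone could otherwise hold, e.g.\ type $I_6$) is a point of extra care that the paper leaves implicit in its citations.
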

	\begin{proof}
		First we show $(i)\implies (ii)\iff (iii)\iff (iv)$. It follows from the hypothesis that the extension $L$ over $K$ is ramified, since it's a quadratic extension, it becomes totally ramified. Let $L'$ be the compositum $LK^{\mathrm{unr}}$. We have $[L:K]=[L':K^{\mathrm{unr}}]$. Then by \cite[p. 312]{Se}, \cite[Theorem 8.2]{Si2}, we have
		$$[L:K]=2\implies\textrm{card}(\Phi_\mathfrak{p})=2\iff \text{Kodaira type}~I_{0}^{*}~( \text{or N\'eron type}~c_4 )\iff v_{\mathfrak{p}}(\Delta)\equiv 6\pmod{12}.$$
		
		$(ii)\implies(i):$ Let $M$ be a degree two extension of $K^{\mathrm{unr}}$, hence totally ramified. We can check that $M=K^{\mathrm{unr}}(\sqrt{\pi})$, $\pi$ is a uniformizer in $K$. As $\mathbb{Q}_p^{\mathrm{unr}}\subset K^{\mathrm{unr}}$, it follows that $|. |_p$ extends uniquely to the field $K^{\mathrm{unr}}$, and  it's ring of integers, $\mathcal{O}_{K^{\mathrm{unr}}}=\{x\in K^{\mathrm{unr}}~:~|x|_p\leq 1 \}$ is a local ring. All degree two totally ramified extensions of $K^{\mathrm{unr}}$ are obtained as $M=K^{\mathrm{unr}}(\sqrt{u\pi})$, for some $u\in {\mathcal{O}^ \times_{K^\mathrm{unr}}}$. Since $p\neq 2$, using Hensel's lemma to the Eisenstein polynomial $X^2-u$,  we can show that $\sqrt{u}\in {\mathcal{O}}^\times_{K^{\mathrm{unr}}}$. Hence we have $M=K^{\mathrm{unr}}(\sqrt{\pi})$ and we take $L=K(\sqrt{\pi})$.
	\end{proof}
	\begin{remark}
		Let $p>3$ and $K$ is a finite extension of $\mathbb{Q}_p$. Suppose $A$ be an abelian variety defined over $K$ with potential good reduction. Let $M$ be the minimal degree $n$ extension of $K^{\mathrm{unr}}$ such that the base change $E_M$ has good reduction (see \cite[Corollary 3, p. 498]{ST}), where $(n,p)=1$. Now as $K^\mathrm{unr}/K$ is  Galois extension and $M$ is a unique subextension of $\bar{K}$, we have $M/K$ Galois. We have a exact sequence
		\begin{equation*}
			1\longrightarrow \mathrm{Gal}(M/K^{\mathrm{unr}})\longrightarrow
			\mathrm{Gal}(M/K)\longrightarrow \mathrm{Gal}(K^{\mathrm{unr}}/K) \longrightarrow 1,
		\end{equation*}
		which splits as $\mathrm{Gal}(K^{\mathrm{unr}}/K)$ is free profinite group. Let $L$ be the subextension of $M$ fixed by the image of $\iota:\mathrm{Gal}(K^{\mathrm{unr}}/K)\hookrightarrow \mathrm{Gal}(M/K)  $. So, $L$ is finite totally ramified extension of degree $n$ and we have $M=L.K^{\mathrm{unr}}$.
	\end{remark}
	
	Now using the above proposition we get the following corollary of theorem \ref{theorem:noncm}:
	\begin{corollary}
		Let $K$ be a number field and $S$ be a finite set of primes containing the archimedean primes of $K$ together with primes lying above $2$ and $3$. Up to quadratic twists, there are only finitely many isomorphism classes of non-CM elliptic curves $E$ defined over $K$ with $v_\mathfrak{p}(\Delta_{min})=0$ or $6$ for any place $\mathfrak{p}$ of $K$ not in $S$.  
	\end{corollary}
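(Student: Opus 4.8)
The plan is to deduce the corollary directly from Theorem \ref{theorem:noncm}, using Proposition \ref{Additive_example} to translate the hypothesis on the minimal discriminant into the reduction hypothesis of that theorem. So I would let $E/K$ be a non-CM elliptic curve with $v_\mathfrak{p}(\Delta_\text{min}) \in \{0,6\}$ for every place $\mathfrak{p} \notin S$, and fix such a $\mathfrak{p}$. Since $S$ is assumed to contain the primes lying above $2$ and $3$, the residue characteristic $p$ of $\mathfrak{p}$ satisfies $p > 3$, so $E_{K_\mathfrak{p}}$ is an elliptic curve over a finite extension of $\Q_p$ with $p \neq 2,3$, and Proposition \ref{Additive_example} is applicable to it (when it has bad reduction).

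Next I would split into two cases according to the value of $v_\mathfrak{p}(\Delta_\text{min})$. If $v_\mathfrak{p}(\Delta_\text{min}) = 0$, the minimal Weierstrass model of $E$ is smooth over $\cO_v$, i.e.\ $E$ has good reduction at $\mathfrak{p}$; since a smooth model base changes to a smooth model, $E_{K_\mathfrak{p}}$ has good reduction over any quadratic extension of $K_\mathfrak{p}$, so in particular it acquires good reduction over some such extension. If $v_\mathfrak{p}(\Delta_\text{min}) = 6$, then $E$ has bad reduction at $\mathfrak{p}$ (the minimal discriminant has positive valuation), and, as $6 \equiv 6 \pmod{12}$, the minimal model satisfies condition $(iv)$ of Proposition \ref{Additive_example}; by the equivalence $(iv) \Longleftrightarrow (i)$ proved there, $E_{K_\mathfrak{p}}$ acquires good reduction over a quadratic extension of $K_\mathfrak{p}$. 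Thus in either case the local hypothesis of Theorem \ref{theorem:noncm} is met at $\mathfrak{p}$.

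Since this holds for every place $\mathfrak{p} \notin S$, the curve $E$ lies in the family considered in Theorem \ref{theorem:noncm}, and that theorem produces only finitely many isomorphism classes of such curves up to quadratic twist, which is precisely the statement of the corollary.

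I do not expect any genuine obstacle here; the argument is a bookkeeping translation. The one point worth care is that condition $(iv)$ of Proposition \ref{Additive_example} is phrased exactly so that the hypothesis ``$v_\mathfrak{p}(\Delta_\text{min}) \in \{0,6\}$'' is what is needed, and that this hypothesis is stable under quadratic twisting away from $S$: twisting by a quadratic character ramified at a prime $\mathfrak{p} \notin S$ with residue characteristic $>3$ interchanges good reduction and Kodaira type $I_0^*$, hence interchanges the values $0$ and $6$ of $v_\mathfrak{p}(\Delta_\text{min})$, which is why the conclusion is naturally formulated up to quadratic twist. One could also observe conversely that, by the full equivalence in Proposition \ref{Additive_example}, the family in the corollary coincides up to quadratic twist with the family in Theorem \ref{theorem:noncm} once $S$ has been enlarged to contain the primes above $2$ and $3$, so the corollary is essentially a reformulation of that theorem in terms of minimal discriminants.
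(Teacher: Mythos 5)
Your proposal is correct and follows exactly the route the paper intends: the corollary is stated as an immediate consequence of Theorem \ref{theorem:noncm} via the equivalence $(iv)\Longleftrightarrow(i)$ of Proposition \ref{Additive_example}, with $v_\mathfrak{p}(\Delta_\text{min})=0$ giving good reduction and $v_\mathfrak{p}(\Delta_\text{min})=6$ giving good reduction over a quadratic extension of $K_\mathfrak{p}$. Your additional remarks on twist-stability are fine but not needed.
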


	\section{Proof of Theorem \ref{theorem:general}}
	
	Let $A$ be an abelian variety defined over a number field $K$. We denote the $\ell$-adic Tate module attached to $A$ by $T_{\ell}(A)=\varprojlim_nA[\ell^n]$, where $A[\ell^n]$ is the group of elements of order  $\ell^n$ in $A(\bar{K})$. This carries with it a continuous action $\rho_{A,\ell}$ of $G_K$. Let $V_\ell(A):=\Q_\ell\otimes_{\Z_\ell} T_{\ell}(A)$. 
	
	For a place $v$ of $K$, denote by $I_v$ the inertia subgroup of $G_{K_v}$. We observe that under our hypothesis, the inertial invariants are trivial: 
	\begin{proposition}
		Let $v$ be a finite place of $K$ at which the abelian variety $A_v$ has totally bad reduction and has potentially good reduction. 
		
		Then for any rational prime $\ell$ coprime to the residue characteristic of $K_v$, the subspace $V_{\ell}(A_v)^{I_v}$ of inertial invariants of the Tate module is trivial. 
	\end{proposition}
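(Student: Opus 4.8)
The plan is to identify $V_{\ell}(A_v)^{I_v}$ with the rational Tate module of the special fibre of the N\'eron model and then to exclude a nonzero contribution by a weight argument.

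First I would pass to the strict henselisation. Writing $\mathcal O_v^{\mathrm{sh}}$ for the ring of integers of $K_v^{\mathrm{unr}}$, the N\'eron model commutes with this base change, so $A(K_v^{\mathrm{unr}})=\mathcal A_v(\mathcal O_v^{\mathrm{sh}})$ by the N\'eron mapping property, and in particular $A[\ell^n]^{I_v}=A[\ell^n](K_v^{\mathrm{unr}})=\mathcal A_v(\mathcal O_v^{\mathrm{sh}})[\ell^n]$. The reduction map $\mathcal A_v(\mathcal O_v^{\mathrm{sh}})\twoheadrightarrow \tilde A_v(\bar k_v)$ is surjective by smoothness together with Hensel's lemma, and its kernel — the points of the formal group — is pro-$p$, hence uniquely $\ell$-divisible for $\ell\neq p$; it therefore contributes neither $\ell$-power torsion nor cotorsion, so that $A[\ell^n]^{I_v}\xrightarrow{\ \sim\ }\tilde A_v(\bar k_v)[\ell^n]$ compatibly with the action of $G_{k_v}=G_{K_v}/I_v$. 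Passing to the inverse limit over $n$ and tensoring with $\Q_\ell$, and using that the component group is finite, gives a $G_{k_v}$-equivariant isomorphism $V_\ell(A_v)^{I_v}\cong V_\ell(\tilde A_v^0)$.

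Next I would use the Chevalley structure of $\tilde A_v^0$. Total bad reduction means $\tilde A_v^0$ is affine, hence (as it is commutative) an extension $0\to U\to \tilde A_v^0\to T\to 0$ with $U$ unipotent and $T$ a torus of some rank $t$ over $k_v$, with no abelian part. Over $\bar k_v$ the group $U(\bar k_v)$ is a $p$-group, so $V_\ell(U)=0$, while $V_\ell(T)\cong X_*(T)\otimes_{\Z}\Q_\ell(1)$. Hence $V_\ell(A_v)^{I_v}\cong X_*(T)\otimes_{\Z}\Q_\ell(1)$, and a geometric Frobenius $\mathrm{Frob}_v\in G_{k_v}$ acts on it with eigenvalues $\zeta q_v$, where $\zeta$ runs over roots of unity and $q_v=\#k_v$; in particular every such eigenvalue has absolute value exactly $q_v$. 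Now bring in potential good reduction: choose a finite extension $L/K_v$ of residue degree $f$ over which $A$ acquires good reduction. The subspace $V_\ell(A_v)^{I_v}$ is $G_{K_v}$-stable and the $G_{K_v}$-action on it factors through $G_{k_v}$, so a geometric Frobenius of $L$ acts on it as $\mathrm{Frob}_v^{\,f}$, and its eigenvalues there have absolute value $q_v^{f}$. On the other hand, by the criterion of N\'eron--Ogg--Shafarevich $V_\ell(A_L)=V_\ell(A_v)$ is unramified over $L$ and is the Tate module of an abelian variety over $k_L$, so the Weil bounds force every Frobenius eigenvalue of $L$ on $V_\ell(A_v)$, hence on $V_\ell(A_v)^{I_v}$, to have absolute value $q_v^{f/2}$. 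Since $q_v\geq 2$ this contradicts $q_v^{f}$ unless $V_\ell(A_v)^{I_v}=0$, which is the assertion (equivalently $t=0$, so the reduction is in fact purely additive).

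The main obstacle is the bookkeeping in the first two steps: making the isomorphism $V_\ell(A_v)^{I_v}\cong V_\ell(\tilde A_v^0)$ genuinely $G_{k_v}$-equivariant (handling the strict henselisation and the $\ell$-divisibility of the formal group), and correctly reading off the Frobenius weights from the toric part. An alternative to the last two steps is to quote the formula $\dim_{\Q_\ell}V_\ell(A_v)^{I_v}=2a+t$ (abelian rank $a$, toric rank $t$) from Grothendieck's \cite{ST}-style analysis in SGA~7, observe that total bad reduction forces $a=0$, and invoke the fact that the toric rank does not decrease under base change so that good reduction over $L$ forces $t=0$; but the weight argument above is self-contained apart from standard structure theory and the Weil conjectures for abelian varieties with good reduction.
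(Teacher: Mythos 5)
Your argument is correct, but it parts ways with the paper's proof at the decisive step. Your first step -- identifying $A[\ell^n]^{I_v}$ with the $\ell^n$-torsion of the special fibre via the N\'eron mapping property over the strict henselisation and the unique $\ell$-divisibility of the kernel of reduction -- is precisely Lemma 2 of Serre--Tate \cite{ST}, which the paper simply cites rather than reproves. From there the paper invokes Serre--Tate's Remark (1) of Section 3: potential good reduction forces the toric rank to vanish, so totally bad (affine) reduction is in fact purely additive, i.e. $\tilde{A}_v^0$ is unipotent; hence the $\ell$-torsion of $\tilde{A}_v(\bar{k}_v)$ is bounded by the order of the finite component group and the invariants vanish after taking the limit over $n$ and tensoring with $\Q_\ell$. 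You instead keep the possible toric part $T$ of $\tilde{A}_v^0$, identify $V_\ell(A_v)^{I_v}$ with $X_*(T)\otimes\Q_\ell(1)$, and kill it by a weight comparison: over an extension $L$ of good reduction the eigenvalues of a Frobenius of $L$ on $V_\ell(A_v)$ are Weil numbers of absolute value $q_v^{f/2}$ (N\'eron--Ogg--Shafarevich plus Weil's theorem), which is incompatible with the absolute value $q_v^{f}$ (or $q_v^{-f}$; you say ``geometric Frobenius'' but use the arithmetic normalization, a harmless slip since either convention gives the mismatch) forced by the twist $\Q_\ell(1)$ on a nonzero toric part. So where the paper disposes of the torus by the structural fact that toric rank cannot drop under base change -- essentially the route you sketch as your ``alternative'' via the formula $\dim V_\ell^{I_v}=2a+t$ -- you use the Weil conjectures for the good reduction over $L$; your version is self-contained modulo standard inputs and recovers the purely additive statement as a byproduct, at the cost of redoing Serre--Tate's lemma and some Frobenius bookkeeping, while the paper's is shorter because both key facts are quoted from \cite{ST}.
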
 
	
	\begin{proof}
		We follow the arguments given in (\cite{ST}, see also \cite[Theorem 10.2, Chapter IV]{Si2} and \cite[Corollary 1.10]{LO}) to prove this result. By (\cite[Lemma 2]{ST}), 
		\[\mbox{Hom}(\Z/\ell^n\Z, A_v(\bar{K}_v))^{I_v}\simeq 
		\mbox{Hom}(\Z/\ell^n\Z, \tilde{A}_v(\bar{k_v})),\]
		where $k_v$ is the residue field of $K_v$. Now by  (\cite{ST}[Remark (1), Section 3, p.500]), we have that $A_v$ has purely additive reduction at $v$. Since the connected component $\tilde{A}_v^0$ of the N\'{e}ron model $\tilde{A}_v$ is unipotent, 
		\[\mbox{Hom}(\Z/\ell^n\Z, \tilde{A}_v^0(\bar{k_v}))=0.\]
		Hence, $\mbox{Hom}(\Z/\ell^n\Z, \tilde{A}_v(\bar{k_v}))$ is a finite abelian group of order dividing $c$, where $c$ is the number of connected components of $\tilde{A}_v$. Taking the limit over $n$ and tensoring with $\Q_{\ell}$ proves the proposition. 
	\end{proof}
	
	We observe now that the inertia at a place of $K$ outside $S$ acts by scalars:
	\begin{corollary} Let $v$ be a finite place of $K$ at which the abelian variety $A_v$ has totally bad reduction and acquires good reduction over a quadratic extension of $K_v$. Assume further that the residue characteristic of $K_v$ is not two. 
		
		Then for any rational prime $\ell$ coprime to the residue characteristic of $K_v$, the inertia group $I_v$ acts via scalars $\{\pm 1\}$ on the Tate module $V_{\ell}(A_v)$. 
	\end{corollary}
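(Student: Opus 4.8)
The plan is to combine the criterion of N\'eron--Ogg--Shafarevich with the preceding Proposition. First I would observe that the quadratic extension $L/K_v$ over which $A_v$ acquires good reduction is necessarily ramified: if it were unramified we would have $I_L = I_v$, so good reduction of $A_L$ would force $I_v$ to act trivially on $V_\ell(A_v)$, and then $A_v$ itself would have good reduction at $v$, contradicting the hypothesis of totally bad reduction. Hence $L/K_v$ is totally ramified of degree $2$, and $I_L$ is a subgroup of $I_v$ of index $2$.

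Next, since $A_L$ has good reduction and $\ell$ is coprime to the residue characteristic, the criterion of N\'eron--Ogg--Shafarevich shows that $I_L$ acts trivially on $V_\ell(A_v)$. Therefore the action of $I_v$ on $V_\ell(A_v)$ factors through the quotient $I_v/I_L \cong \Z/2\Z$: writing $\rho = \rho_{A,\ell}$, we get $\rho(I_v) = \{1,\tau\}$ with $\tau = \rho(\sigma)$ for any $\sigma \in I_v \setminus I_L$, and $\tau^2 = 1$.

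Now I would invoke the preceding Proposition. Its hypotheses are met here --- $A_v$ has totally bad reduction, and it has potentially good reduction since it acquires good reduction over $L$ --- so $V_\ell(A_v)^{I_v} = 0$. On the other hand $V_\ell(A_v)^{I_v}$ is precisely the $(+1)$-eigenspace of $\tau$ on $V_\ell(A_v)$. Since $\tau^2 = 1$ and $\Q_\ell$ contains the two distinct roots $\pm 1$ of $X^2-1$, the operator $\tau$ is diagonalizable with eigenvalues in $\{\pm 1\}$; as its $(+1)$-eigenspace is zero (and $V_\ell(A_v)\neq 0$ since $d\geq 1$, the case $d=0$ being vacuous), we conclude $\tau = -\mathrm{Id}_{V_\ell(A_v)}$. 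Thus $I_v$ acts on $V_\ell(A_v)$ through the scalars $\{\pm 1\}\subset\Q_\ell^\times$, which is the assertion. The residue characteristic being different from $2$ is what makes the quadratic ramified extension $L/K_v$ behave in the tame manner implicit in the inertia computations above.

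I do not anticipate a serious obstacle: the ingredients are the ramified-versus-unramified dichotomy, N\'eron--Ogg--Shafarevich, the eigenspace decomposition of an order-two operator, and the input of the previous Proposition. The only mildly delicate points are the identification $V_\ell(A_v)^{I_v} = V_\ell(A_v)^{\tau}$ (immediate, as the $I_v$-action factors through $\tau$) and the diagonalizability of $\tau$ over $\Q_\ell$ (automatic for every $\ell$). An alternative that avoids the auxiliary field $L$ would be to use the Serre--Tate description of $\rho(I_v)$ as $\mathrm{Gal}(M/K_v^{\mathrm{unr}})$, where $M$ is the minimal extension of $K_v^{\mathrm{unr}}$ over which $A$ attains good reduction, to note that $M\subseteq L\,K_v^{\mathrm{unr}}$ has degree dividing $2$ over $K_v^{\mathrm{unr}}$, and then to run the same eigenspace argument.
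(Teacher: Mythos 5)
Your proposal is correct and follows essentially the same route as the paper: show the quadratic extension is ramified, use the N\'eron--Ogg--Shafarevich criterion to see that the $I_v$-action on $V_\ell(A_v)$ factors through $I_v/I_w\cong \Z/2\Z$, and then invoke the preceding proposition ($V_\ell(A_v)^{I_v}=0$) to force the nontrivial element to act by $-1$. Your explicit remark on the diagonalizability of the order-two operator only makes precise a step the paper leaves implicit, so there is nothing to change.
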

	\begin{proof}
		Suppose $L_w$ is a quadratic extension of $K_v$ at which $A_v$ acquires good reduction, and let $I_w$ denote its inertia subgroup. For an abelian variety $A$ over $K$, Serre and Tate (\cite{ST}), 
		extending the work of N\'{e}ron, Ogg, and Shafarevich on elliptic curves, showed that  $A$ has good reduction at a finite place $v$ of $K$, if and only if the representation of $G_K$ on the Tate module $V_\ell(A)$ attached to $A$ is unramified at $v$, for all $\ell$ not equal to the residue characteristic at $v$. 
		
		It follows from the hypothesis that the extension $L_w$ over $K_v$ is ramified, and  that the action of the inertia group $I_v$ on $V_\ell(A)$ is trivial restricted to the subgroup $I_w$ of index two in $I_v$.
		
		By the above proposition,  $V_\ell(A_v)^{I_v}=0$, i.e., there exists no invariant vector for $I_v$ action on $V_\ell(E)$. So the action of  $I_v/I_w\cong \Z/{2\Z}$ on $V_\ell(A_v)$ can't fix any non zero vector, i.e., the non trivial element $\sigma$ of $I_v/I_w$ acts by $-1$ on $V_\ell(A_v)$. 
	\end{proof}
	\begin{remark}
		$A_v^\chi$ defined over $K_v$ has good reduction, where  $\chi$ is the quadratic character corresponding to $L_w/K_v$. This generalizes Kida (\cite[ Proposition 2.2]{Ki}).
		
	\end{remark}
	
	\subsection{Potential finiteness} In \cite{Fa}, Faltings proved Tate's conjecture on homomorphisms between abelian varieties $A, B$ defined over $K$: 
	\[ \mbox{Hom}(A,B)\otimes \Q_{\ell}\simeq \mbox{Hom}_{G_K}(V_{\ell}(A), V_{\ell}(B)).\]
	From this, Shafarevich's conjecture was deduced by showing a finiteness criteria determining the isomorphism class of a $\ell$-adic representation: given $K, S$ and $n$, there exists a finite set of places $T$ disjoint from $S$, such that the traces of the Frobenius classes at $T$ serves to determine uniquely a representation $\rho:G_K\to GL_n(\Q_{\ell})$ that is unramified outside $S$. 
	
	In (\cite{DR}), a generalization of the finiteness criteria is given, where instead of the assumption that the representations are unramified outside $S$, it is assumed that the inertia outside $S$ can act by scalar endomorphisms (\cite[Theorem 7]{DR}). 
	
	The proof of Theorem \ref{theorem:ellcurves} and the first part of Theorem \ref{theorem:general}, follows from the above corollary together with the following extension of Faltings' finiteness criteria proved in (\cite[Corollary 11]{DR}): 
	\begin{theorem}[Corollary 11, \cite{DR}]
		Let $K$ be a number field and $S$ be a finite set of non-archimedean
		places of $K$. 
		
		Then there are, up to potential isogeny, only finitely many Abelian varieties of dimension $g$ defined over $K$, such that the inertia at a place $v$ not in $S$ acts by scalars on a Tate module  $V_\ell(A)$ for $\ell$ coprime to the residue characteristic at $v$. 
	\end{theorem}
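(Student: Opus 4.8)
The plan is to reduce the statement, via Faltings' theorems on abelian varieties, to the finiteness criterion for $\ell$-adic representations with scalar inertia recalled above, the point being to feed the Weil bounds on Frobenius eigenvalues into that criterion. Fix an auxiliary prime $\ell$ and enlarge $S$ to a finite set $S_0\supseteq S$ which in addition contains the places of $K$ lying above $\ell$. For an abelian variety $A$ of dimension $g$ over $K$ as in the statement, set $\rho_A=\rho_{A,\ell}\colon G_K\to\GL(V_\ell(A))$; this is a $2g$-dimensional representation which is semisimple by Faltings, and for which $I_v$ acts through a scalar $\lambda_v$ for every $v\notin S_0$ (the places above $\ell$ having been absorbed into $S_0$, the archimedean ones lying already in $S$). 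If $B$ is another such abelian variety with $\rho_A\cong\rho_B\otimes\eta$ for some character $\eta$ of $G_K$ of finite order, then restricting to $G_{K'}$ with $K'=\bar K^{\ker\eta}$, a finite extension of $K$, gives $V_\ell(A)\cong V_\ell(B)$ as $G_{K'}$-modules; Faltings' isomorphism $\mathrm{Hom}_{K'}(A,B)\otimes\Q_\ell\cong\mathrm{Hom}_{G_{K'}}(V_\ell(A),V_\ell(B))$ together with semisimplicity then yields an isogeny $A\to B$ over $K'$, so $A$ and $B$ are potentially isogenous. Hence it is enough to show that, as $A$ ranges over the family, $\rho_A$ lies in finitely many isomorphism classes up to twist by characters of $G_K$ of bounded finite order.

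Two observations put $\{\rho_A\}$ inside the scope of the quoted criterion. First, the scalars $\lambda_v$ have order bounded in terms of $g$: choosing a polarization of $A$ over $K$, the Weil pairing identifies $\det\rho_A$ with $\chi_\ell^{\,g}\varepsilon_A$, where $\chi_\ell$ is the $\ell$-adic cyclotomic character and $\varepsilon_A$ is a character with $\varepsilon_A^2=1$; restricting to $I_v$ for $v\notin S_0$ kills $\chi_\ell$, so the character $\sigma\mapsto\lambda_v(\sigma)^{2g}$ equals $\varepsilon_A|_{I_v}$ and has order at most $2$, forcing $\lambda_v$ to be a root of unity of order dividing $4g$. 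Second --- and this is the mechanism behind \cite[Theorem 7]{DR} --- the adjoint representation $\mathrm{ad}\,\rho_A=\rho_A\otimes\rho_A^{\vee}\cong\mathrm{End}(V_\ell(A))$ is \emph{genuinely unramified} outside $S_0$, since the scalar $\lambda_v$ cancels against its inverse; it is semisimple of dimension $(2g)^2$, and because $\lambda_v$ has finite order $A$ acquires good reduction at $v$ over the (bounded degree) extension trivializing $\lambda_v$, so the Weil estimates $|\alpha|=q_v^{1/2}$ for a good model apply after a bounded base change and force the characteristic polynomial of $\mathrm{Frob}_v$ on $\mathrm{ad}\,\rho_A$, for $v\notin S_0$, to be monic with algebraic-integer coefficients of bounded degree and bounded archimedean absolute values; thus only finitely many of these polynomials occur at each $v$. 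Applying Faltings' finiteness criterion --- in the scalar-inertia form of \cite[Theorem 7, Corollary 11]{DR}, or the original criterion applied to the $(2g)^2$-dimensional family $\{\mathrm{ad}\,\rho_A\}$ --- produces a finite set $T$ of places disjoint from $S_0$ such that the Frobenius data at $T$ determines $\rho_A$ up to twist by a character; since only finitely many such data can arise, $\{\rho_A\}$ falls into finitely many classes up to twist.

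It remains to bound the order of the admissible twisting characters: if $\rho_A\cong\rho_B\otimes\eta$ then, comparing determinants and using $\det\rho_A=\chi_\ell^{\,g}\varepsilon_A$, $\det\rho_B=\chi_\ell^{\,g}\varepsilon_B$ with $\varepsilon_A^2=\varepsilon_B^2=1$, one gets $\eta^{2g}=\varepsilon_A\varepsilon_B^{-1}$, of order at most $2$, hence $\eta^{4g}=1$. Combining this with the previous paragraph, there are finitely many abelian varieties $A_1,\dots,A_m$ in the family such that every $A$ in the family satisfies $\rho_A\cong\rho_{A_i,\ell}\otimes\eta$ for some $i$ and some character $\eta$ of order dividing $4g$; by the first paragraph, every $A$ in the family is then potentially isogenous to one of $A_1,\dots,A_m$, which proves the theorem. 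I expect the main obstacle to be the finiteness criterion itself --- the content of \cite[Theorem 7]{DR} --- that is, carrying out Faltings' argument to show that a semisimple $\ell$-adic representation which is scalar on inertia outside a finite set and has Weil-bounded Frobenius characteristic polynomials is pinned down, up to a controlled twist, by finitely many Frobenius traces; everything else (the adjoint trick trading scalar inertia for genuine unramifiedness, the determinant bookkeeping bounding the twist, and the descent of potential isogeny to an isomorphism of Tate modules over a finite extension) is formal once Faltings' isogeny theorem and the Weil bounds are in hand. A secondary technical point to verify is that $\mathrm{ad}\,\rho$ determines a semisimple $\rho$ up to twist by a character even when $\rho$ is reducible, and that a polarization of $A$ --- hence the computation of $\det\rho_A$ --- can be taken over $K$.
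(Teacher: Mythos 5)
The paper contains no internal proof of this statement: it is imported verbatim as Corollary 11 of \cite{DR} and used as a black box, so there is nothing in the text to compare your argument against. What you have written is, in effect, the derivation of that corollary from the main finiteness criterion of the same reference, \cite[Theorem 7]{DR}, together with standard inputs, and the reductions you actually carry out are essentially sound: a polarization of $A$ exists over $K$, so $\det\rho_{A,\ell}=\chi_\ell^{\,g}$ (your auxiliary $\varepsilon_A$ is superfluous) and the inertial scalars at $v\notin S_0$ have order dividing $2g$; by N\'eron--Ogg--Shafarevich this forces good reduction at $v$ over an extension of bounded degree, so the Frobenius data outside $S_0$ is Weil-bounded (note, though, that the eigenvalues of Frobenius on $\mathrm{ad}\,\rho_A$ are ratios of Weil numbers, so the characteristic-polynomial coefficients are not algebraic integers, only of denominator dividing a power of $q_v$ --- finiteness at each place still holds); and twist-equivalence of the $\rho_{A,\ell}$ by a finite-order character descends to potential isogeny via Faltings' isogeny theorem over the splitting field of the twist, with the order of the twist bounded by the determinant comparison.

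The genuine gap, which you acknowledge, is that the entire substance --- a fixed finite set $T$ of places at which Frobenius data pins down, up to twist, a semisimple representation whose inertia outside $S$ acts by scalars and whose Frobenius polynomials satisfy the Weil-type bounds --- is exactly \cite[Theorem 7]{DR}, which you assume rather than prove; and invoking ``Corollary 11'' at that step would be circular, since that is the statement being established. Your proposed fallback, applying Faltings' original criterion to the family $\{\mathrm{ad}\,\rho_A\}$, does not close this gap: for semisimple representations, $\mathrm{ad}\,\rho\cong\mathrm{ad}\,\rho'$ does not in general imply $\rho'\cong\rho\otimes\eta$ (for instance $\mathrm{ad}\,\rho\cong\mathrm{ad}(\rho^{\vee})$ always, while $\rho^{\vee}$ need not be a twist of $\rho$; for sums of three generic characters one gets genuine failures), so the point you defer as ``secondary'' is a real obstruction, and the criterion must be used in the scalar-inertia form actually proved in \cite{DR}. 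In short, relative to the paper --- which outsources the whole statement --- you have correctly supplied the routine reduction, but the essential finiteness criterion remains exactly as outsourced as before.
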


	\subsection{Twist equivalence} Suppose $\mbox{End}_{\bar{K}}(A)=\Z$, we now give two proofs that up to twist equivalence  there are only finitely many such abelian varieties satisfying the hypothesis of Theorem \ref{theorem:general}. For such abelian varieties, the group of isogenies of $A$ onto itself can be identified with $\mbox{End}_{\bar{K}}(A)\otimes \Q\simeq \Q$. 
	
	Suppose $A, A'$ are abelian varieties defined over $K$ as above and $\phi$ is an isogeny from $A'$ to $A$ defined over $\bar{K}$. Let $\phi^*$ denote the dual isogeny $A\to A'$. We have for some natural number $d$, the degree of $\phi$, 
	\[ \phi^*\circ \phi=[d], \]
	where $[d]$ denotes the endomorphism of $A'$ given by multiplication by $d$. 
	For $\sigma\in G_K$, define an element of $\mbox{End}_{\bar{K}}(A)\otimes \Q\simeq \Q$, by 
	\[ c_{\sigma}=\frac{1}{d}\phi^{\sigma}\circ \phi^*.\]
	It can be checked that $\sigma\mapsto c_{\sigma}$ defines a $1$-cocycle of $G_K$ with values in $\Q^*$:
	\[\begin{split}
		c_{\sigma\tau}&=\frac{1}{d}\phi^{\sigma\tau}\circ \phi^*\\
		&=\frac{1}{d^2}\phi^{\sigma\tau}\circ (\phi^*)^{\tau}\circ\phi^{\tau}\circ\phi^*\\
		&=\left(\frac{1}{d}\phi^{\sigma}\circ \phi^*\right)^{\tau}\circ \frac{1}{d}\phi^{\tau}\circ \phi^*\\
		&= c_{\sigma}^{\tau}c_{\tau}.
	\end{split}
	\]
	The Galois group $G_K$ acts trivially on $\Q^*$. Hence, 
	\[ H^1(G_K, \Q^*)=\mbox{Hom}(G_K, \Q^*)\simeq \mbox{Hom}(G_K, \{\pm 1\}).\]
	This shows that $A$ and twist of $A'$ by a quadratic character are isogenus. 
	
	By the finiteness up to potential isogeny given above, we can choose some finitely many isogeny classes of abelian varieties defined over $K$, say given by $A_1, \cdots, A_r$ such that any abelian variety $A'$ satisfying the hypothesis of the theorem is potentially isogenous to some $A_i$. 
	
	Hence there exists a quadratic character $\chi$ of $G_K$ such that $A'^{\chi}$ belongs to the isogeny class of $A_i$. The theorem follows from the fact that there are only finitely many abelian varieties up to equivalence in any isogeny class. 
	
	\noindent	{\it Alternate proof:} By the validity of Tate's conjecture proved by Faltings and Schur's lemma, the Galois group $G_K$ acts absolutely irreducibly on the Tate module $V_\ell(A)$ (for some or all $\ell$) precisely when $\mbox{End}_{K}(A)=\Z$. The condition $\mbox{End}_{\bar{K}}(A)=\Z$ amounts to saying that the connected component of the algebraic monodromy group of $G_K$ acts absolutely irreducibly on the Tate module $V_\ell(A)$ for one or equivalently for all $\ell$. 
	
	For any $\ell$, the Galois representations $\rho_{A, \ell}$ and $\rho_{A',\ell}$ are potentially isomorphic by the first part of the theorem. Since the connected component of the algebraic monodromy group acts absolutely irreducibly, it follows that there is a unique finite order character $\chi_{\ell}:G_K\to \bar{\Q}_{\ell}^*$, such that 
	\[ \rho_{A',\ell}=\rho_{A,\ell}\otimes \chi_{\ell},\] and the character $\chi$ can be seen to have values in $\Q_{\ell}^*$ (see \cite[Proposition 12]{DR}). By uniqueness, it is independent of $\ell$. Hence it has to take values in $\{\pm 1\}$. We then use the fact that there are only finitely many abelian varieties up to equivalence in any isogeny class to conclude the proof of finiteness up to twists.

	\subsection*{Acknowledgements} The first author thanks the School of Mathematics, Tata Institute of Fundamental Research, Mumbai and UM-DAE Centre for Excellence in Basic Sciences, Mumbai for their support. The research of the first author is supported by a DST-INSPIRE Fellowship IF150349.

\end{document}